\numberwithin{equation}{section}
\newtheorem{theorem}{Theorem}[section]
\newtheorem{proposition}[theorem]{Proposition}
\newtheorem{corollary}[theorem]{Corollary}
\theoremstyle{definition}
\def\now{ \ifnum\hours>11 \ifnum\hours>12 \advance\hours by
-12 \fi \number\hours:\ifnum\mins<10 0\fi \number\mins\ pm,\ \else
\ifnum\hours=0 \hours=12 \fi \number\hours:\ifnum\mins<10 0\fi
\number\mins\ am,\ \fi}
\newcommand{\V}{\mathbb{V}}
\newcommand{\bea}{\begin{equation}}
\newcommand{\eea}{\end{equation}}
\newcommand\utimes{\mathbin{\ooalign{$\cup$\cr%
   \hfil\raise0.42ex\hbox{$\scriptscriptstyle\times$}\hfil\cr}}}
\newcommand\bigutimes{\mathop{\ooalign{$\bigcup$\cr%
   \hfil\raise0.36ex\hbox{$\scriptscriptstyle\boldsymbol{\times}$}\hfil\cr}}}
\newcommand{\CSK}{Cauchy-Stieltjes Kernel (CSK)\ \renewcommand{\CSK}{CSK\ }}
\newcommand{\FEF}{Free Exponential  (FE)\ \renewcommand{\FEF}{FE\ }}
\title{ Cauchy-Stieltjes kernels families and free multiplicative convolution }
\author{Abdelhamid Hassairi* and Raouf Fakhfakh \ddag}
\address{ * Laboratory of Probability and Statistics,
Sfax University,  Sfax, Tunisia} \email{}
\address{\ddag  College of Science and Arts in Gurayat, Jouf University, Gurayat, Saudi Arabia \& Laboratory of Probability and Statistics,
Sfax University,  Sfax, Tunisia}
\email{fakhfakh.raouf@gmail.com}
\keywords{Variance function; Cauchy kernel; free multiplicative convolution; S-transform}
\subjclass[2000]{60E10; 46L54}
\subjclass[2000]{ AMS Mathematics Subject Classification 2010}
\begin{document}

\begin{abstract}
In this paper, we determine the effect of the free multiplicative
convolution on the pseudo-variance function of a Cauchy-Stieltjes
kernel family. We then use the machinery of variance functions to
establish some limit theorems related to this type of convolution
and involving the free additive convolution and
the boolean additive convolution.\\
\\
AMS Mathematics Subject Classification 2010 : 60E10; 46L54.
\end{abstract}

\maketitle

\section{Introduction}
It is well known that the most common probability distributions used
in statistics belong to natural exponential families whose
definition is based on the exponential kernel $\exp(\theta x)$. This
explains the importance of the theory of exponential families and
their variance functions both in probability theory and in
statistics. In the framework of free probability and in analogy with
the theory of natural exponential families, a theory of
Cauchy-Stieltjes kernel (CSK) families has been recently introduced,
it is based on the Cauchy-Stieltjes kernel $1/(1-\theta x)$. For
instance, Bryc in \cite{Bryc-06-08} has initiated the study of the
CSK families for compactly supported probability measures. He has in
particular shown that such families can be parameterized by the mean
$m$. With this parametrization, denoting $V(m)$ the variance of the
element with mean $m$, the function $m\mapsto V(m)$ called the
variance function and the mean $m_0$ of the generating measure $\nu$
uniquely determines the family and $\nu$. Bryc has also given the
effect on the variance function of the additive power of free
convolution $\boxplus$, more precisely, he has shown that for
$\alpha>0$,
\begin{equation}\label{Vbox+alpha}
V_{\nu^{\boxplus \alpha}}(m)=\alpha V_{\nu}(m/\alpha).
\end{equation}
 In \cite{Bryc-Hassairi-09}, Bryc and Hassairi  have extended the results established in \cite{Bryc-06-08}
 to measures with unbounded support. They have provided a method to
determine the domain of means and introduced a notion of
pseudo-variance function which has no direct
 probabilistic interpretation but it has the properties of a variance function.
 These authors have also characterized the class of cubic CSK families with support bounded from one side, they
 have shown that this class is
 related to the quadratic class by a relation of reciprocity between
 tow Cauchy-Stieltjes kernel families expressed in terms of the $R$-transforms of the corresponding generating probability measures.
 A general description of polynomial variance
 function with arbitrary degree is given in \cite{Bryc-Raouf-Wojciech-18}. In particular, a complete
 description of the cubic compactly supported CSK families is given. Concerning the
 effect of the boolean additive convolution power on the variance function, it is shown in
 \cite{Raouf-19} that if $\nu$ is a probability
 measure on $\mathbb{R}$ with $m_0<+\infty$, then for $\alpha>0$, we have
\begin{equation}\label{VU+alpha}
V_{\nu^{\uplus \alpha}}(m)=\alpha V_{\nu}(m/\alpha)+m(m-\alpha m_0)(1/\alpha-1).
\end{equation}
Pursuing the study of the Cauchy-Stieltjes kernel families, we
determine in the present paper the effect on the variance function
of the free multiplicative convolution. We also use the variance
functions to re-derive in a easy way the limit theorems given in
\cite{Noriyoshi Sakuma-Hiroaki Yoshida} and involving the different
types of convolution. Similar results are established replacing the
free additive convolution by the boolean additive convolution. In
the rest of this section, we recall a few features about
Cauchy-Stieltjes kernel families. Our notations are the ones used in
\cite{Raouf-18}.\\Let $\nu$ be a non-degenerate probability measure
with support bounded from above. Then
\begin{equation}   \label{M(theta)}
M_{\nu}(\theta)=\int \frac{1}{1- \theta x} \nu(dx)
\end{equation}
 is defined for all $\theta\in [0,\theta_+)$ with $1/\theta_+=\max\{0, \sup {\rm supp}
 (\nu)\}$.\\
 For $\theta\in [0,\theta_+)$, we set
 $$P_{(\theta,\nu)}(dx)=\frac{1}{M_{\nu}(\theta)(1-\theta
x)}\nu(dx).$$ The set
 $$\mathcal{K}_+(\nu)=\{P_{(\theta,\nu)}(dx); \theta\in(0,\theta_+)\}$$
is called the one-sided Cauchy-Stieltjes kernel family generated by
$\nu$.\\ Let $k_{\nu}(\theta)=\int x P_{(\theta,\nu)}(dx)$ denote
the mean of $P_{(\theta,\nu)}$. Then the map $\theta\mapsto
k_{\nu}(\theta)$ is strictly increasing on $(0,\theta_+)$, it is
given by the formula
\begin{equation}\label{L2m}
k_{\nu}(\theta)=\frac{M_{\nu}(\theta)-1}{\theta M_{\nu}(\theta)}.
\end{equation}
The image of $(0,\theta_+)$ by $k_{\nu}$ is called the (one sided)
domain of the means of the family $\mathcal{K}_+(\nu)$, it is
denoted $(m_0(\nu),m_+(\nu))$. This leads to a parametrization of
the family $\mathcal{K}_+(\nu)$ by the mean. In fact, denoting by
$\psi_{\nu}$ the reciprocal of $k_{\nu}$, and writing for $m\in
(m_0(\nu),m_+(\nu))$, $Q_{(m,\nu)}(dx)=P_{(\psi_{\nu}(m),\nu)}(dx)$, we have
that
 $$\mathcal{K}_+(\nu)=\{Q_{(m,\nu)}(dx); m\in
(m_0(\nu),m_+(\nu))\}.$$ Now let
 \begin{equation}\label{B(nu)}
 B=B(\nu)=\max\{0,\sup {\rm supp}(\nu)\}=1/\theta_+\in[0,\infty).
 \end{equation}
 Then it is shown in \cite{Bryc-Hassairi-09} that the bounds $m_0(\nu)$ and $m_+(\nu)$ of the one-sided
 domain of means $(m_0(\nu),m_+(\nu))$ are given by
\begin{equation}\label{m0}
m_0(\nu)=\lim_{\theta\to 0^+} k_{\nu}(\theta)
\end{equation}
and with $B=B(\nu)$,
\begin{equation}\label{m+}
m_+(\nu)=B-\lim_{z\to B^+}\frac{1}{G_{\nu}(z)},
\end{equation}
where $G_{\nu}(z)$ is the Cauchy transform of $\nu$ given by
\begin{equation}\label{G-transform}
G_\nu(z)=\int\frac{1}{z-x}\nu(dx).
\end{equation}
It is worth mentioning here that one may define the one-sided
Cauchy-Stieltjes kernel family for a measure $\nu$ with support
bounded from below. This family is usually denoted
${\mathcal{K}}_-(\nu)$ and parameterized by $\theta$ such that
$\theta_-<\theta<0$, where $\theta_-$ is either $1/b(\nu)$ or
$-\infty$ with $b=b(\nu)=\min\{0,\inf supp(\nu)\}$.
The domain of the means for ${\mathcal{K}}_-(\nu)$ is the interval
$(m_-(\nu),m_0(\nu))$ with $m_-(\nu)=b-1/G_\nu(b)$.
\\If $\nu$ has compact support, the natural domain for the parameter
$\theta$ of the two-sided \CSK family
$\mathcal{K}(\nu)=\mathcal{K}_+(\nu)\cup\mathcal{K}_-(\nu)\cup\{\nu\}$
is $\theta_-<\theta<\theta_+$. \\We come now to the notions of
variance and pseudo-variance functions. The variance function
\begin{equation}
  \label{Def Var}
  m\mapsto V_{\nu}(m)=\int (x-m)^2 Q_{(m,\nu)}(dx)
\end{equation}
 is a fundamental concept both in the theory of natural exponential
 families
 and in the theory of Cauchy-Stieltjes kernel families as presented
 in \cite{Bryc-06-08}. Unfortunately, if $\nu$ hasn't a first moment which is for example the case for a 1/2-stable law,
 all the distributions in the Cauchy-Stieltjes kernel family generated by $\nu$ have infinite variance.
 This fact has led the authors in \cite{Bryc-Hassairi-09} to introduce a notion of pseudo-variance
 function defined by
 \begin{equation}\label{m2v}
\frac{\V_{\nu}(m)}{m}=\frac{1}{\psi_{\nu}(m)}-m,
\end{equation}
If $m_0(\nu)=\int x d\nu$ is finite, then (see
\cite{Bryc-Hassairi-09}) the pseudo-variance
 function is related to the variance
 function by
\begin{equation}\label{VV2V}
\frac{\V_{\nu}(m)}{m}=\frac{V_{\nu}(m)}{m-m_0}.\end{equation}
 In particular, $\V_{\nu}=V_{\nu}$ when $m_0(\nu)=0$.
 The generating measure
$\nu$ is uniquely determined by the
 pseudo-variance function $\V_{\nu}$. In fact, if we set
\begin{equation}\label{z2m}
 z=z(m)=m+\frac{\V_{\nu}(m)}{ m},
 \end{equation}
 then the Cauchy transform satisfies
 \begin{equation}
  \label{G2V}
  G_\nu(z)=\frac{m}{\V_{\nu}(m)}.
\end{equation}
Also the distribution $Q_{(m,\nu)}(dx)$ may be written as $
Q_{(m,\nu)}(dx)=f_{\nu}(x,m)\nu(dx)$ with
\begin{equation} \label{F(V)}
f_{\nu}(x,m):=\left\{
               \begin{array}{ll}
                 \frac{\V_{\nu}(m)}{\V_{\nu}(m)+m(m-x)}, \ \ \ \ m\neq 0& \hbox{;} \\
                 1, \ \ \ \ \ \ \ \ \ \ \ \ \ \ \ \ \ \ \ \ m= 0, \ \ \V_{\nu}(0)\neq 0 & \hbox{;} \\
                 \frac{\V'_{\nu}(0)}{\V'_{\nu}(0)-x}, \ \ \ \ \ \ \ \ \ \ \ \ m= 0, \ \ \V_{\nu}(0)= 0 & \hbox{.}
               \end{array}
             \right.
\end{equation}
We now recall the effect on a Cauchy-Stieltjes kernel family of
applying an affine transformation to the generating measure.
Consider the affine transformation
 $$f:x\longmapsto(x-\lambda)/\beta$$ where $\beta\neq0$ and $\lambda\in\mathbb{R}$ and let $f(\nu)$ be the
 image of $\nu$ by $f$. In other words, if $X$ is a
 random variable with law $\nu$, then $f(\nu)$ is the law of
 $(X-\lambda)/\beta$, or
 $f(\nu)=D_{1/\beta}(\nu\boxplus\delta_{-\lambda})$, where
 $D_r(\mu)$ denotes the dilation of measure $\mu$ by a number
 $r\neq0$, that is $D_r(\mu)(U)=\mu(U/r)$.
 The point $m_0$ is transformed to $(m_0-\lambda)/\beta$.
 In particular, if $\beta<0$ the support of the measure $f(\nu)$ is bounded
 from below so that it generates the left-sided family $\mathcal{K}_-(f(\nu))$.
For $m$ close enough to $(m_0-\lambda)/\beta$, the pseudo-variance
function is
 \begin{equation}\label{pseudo-var affine transf}
 \mathbb{V}_{f(\nu)}(m)=\frac{m}{\beta(m\beta+\lambda)}\mathbb{V}_\nu(\beta m+\lambda).
 \end{equation}
 In particular, if the variance function exists, then
 $$V_{f(\nu)}(m)=\frac{1}{\beta^2}V_\nu(\beta m+\lambda).$$
 Note that using the special case where $f$ is the reflection $f(x)=-x$, on can transform
 a right-sided Cauchy-Stieltjes kernel family to a left-sided family. If
 $\nu$ has support bounded from above and its right-sided \CSK
 family $\mathcal{K}_+(\nu)$ has domain of means $(m_0,m_+)$ and
 pseudo-variance function $\mathbb{V}_\nu(m)$, then $f(\nu)$
 generates the left-sided \CSK family $\mathcal{K}_-(f(\nu))$
 with domain of means $(-m_+,-m_0)$ and pseudo-variance
 function $\mathbb{V}_{f(\nu)}(m)=\mathbb{V}_\nu(-m)$.

 To close this section, we state the following result due to Bryc \cite{Bryc-06-08}
 which is crucial in our method of proof of the limit theorems that will be given in Section 3.
\begin{proposition}\label{proposition2}
Let $V_{\nu_n}$ be a family of analytic functions which are variance
functions of a sequence of \CSK families
$\left(\mathcal{K}(\nu_n)\right)_{ n\geq1}$. \\If
$V_{\nu_n}\xrightarrow{n\to +\infty} V$ uniformly in a (complex)
neighborhood of $m_0\in\mathbb{R}$ and if $V(m_0)>0$, then there is
$\varepsilon>0$ such that $V$ is the variance function of a \CSK
family ${\mathcal{K}}(\nu)$, generated by a probability measure
$\nu$
parameterized by the mean $m\in(m_0-\varepsilon,m_0+\varepsilon)$.\\
Moreover, if a sequence of measures $\mu_n\in {\mathcal{K}}(\nu_n)$
such that $m_1=\int x \mu_n(dx)\in
(m_0-\varepsilon,m_0+\varepsilon)$ does not depends on $n$, then
$\mu_n\xrightarrow{n\to +\infty} \mu$ in distribution, where
$\mu\in{\mathcal{K}}(\nu)$ has the same mean $\int x \mu(dx)=m_1$.
\end{proposition}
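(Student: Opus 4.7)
The plan is to translate uniform convergence of the variance functions into uniform convergence of Cauchy transforms on a neighbourhood of $\infty$ in the upper half-plane, and then invoke the Stieltjes continuity theorem.

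The starting point is the parametric description of the Cauchy transform of a CSK-generator $\eta$ given by \eqref{z2m}--\eqref{G2V} combined with \eqref{VV2V},
\[
G_\eta\!\left(m+\frac{V_\eta(m)}{m-m_0(\eta)}\right)=\frac{m-m_0(\eta)}{V_\eta(m)},
\]
which recovers $G_\eta$ parametrically from $V_\eta$ and the mean $m_0(\eta)$ on a complex neighbourhood of $m_0(\eta)$. Applying this to $\nu_n$ and using uniform convergence $V_{\nu_n}\to V$ on a complex neighbourhood of $m_0$, I would show that the means $m_0(\nu_n)$ stay bounded with any accumulation point close to $m_0$, that the parametrisations $z_n(m)=m+V_{\nu_n}(m)/(m-m_0(\nu_n))$ invert uniformly on a common neighbourhood $W$ of $\infty$ in the upper half-plane (via a Rouch\'e argument using $V(m_0)>0$ to keep the residues of the simple poles bounded away from zero), and therefore that $G_{\nu_n}\to G$ uniformly on compacta of $W$ for a limit analytic function $G$. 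The parametric description gives $G_{\nu_n}(z)=1/z+O(1/z^2)$, which is inherited in the limit, so by the Nevanlinna representation $G$ is the Cauchy transform of a probability measure $\nu$ whose variance function is $V$; this proves the first assertion.

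For the moreover part, I would write $\mu_n=Q_{(m_1,\nu_n)}=f_{\nu_n}(\,\cdot\,,m_1)\,\nu_n(dx)$ via the reweighting formula \eqref{F(V)}; the density $f_{\nu_n}(\,\cdot\,,m_1)$ is a rational function of $x$ whose coefficients depend continuously on $V_{\nu_n}(m_1)\to V(m_1)$, on $m_1$, and on the accumulation point of $m_0(\nu_n)$. Uniform convergence of the coefficients together with the weak convergence $\nu_n\to\nu$ (from the Stieltjes continuity theorem applied to $G_{\nu_n}\to G$) then gives $\mu_n\to\mu:=Q_{(m_1,\nu)}\in\mathcal{K}(\nu)$ in distribution, with the required mean $m_1$.

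The main obstacle I anticipate is the control of the moving left endpoint $m_0(\nu_n)$, which is not directly assumed to converge but whose behaviour governs the Rouch\'e inversion producing the common neighbourhood $W$ of $\infty$. The point is that the hypothesis of uniform complex-analytic convergence $V_{\nu_n}\to V$ on a neighbourhood of $m_0$, combined with positivity $V(m_0)>0$, should force $m_0(\nu_n)$ to remain close to $m_0$: otherwise the domain of analyticity of the $V_{\nu_n}$ at $m_0$ would degenerate in the limit. Quantifying this is the technical heart of the proof, and it is precisely where the strength of the complex uniform convergence hypothesis is essential.
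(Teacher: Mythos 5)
The paper gives no proof of this proposition: it is imported as a black box from \cite{Bryc-06-08}, so there is nothing internal to compare against. Your overall strategy --- recover $G_{\nu_n}$ parametrically from $V_{\nu_n}$ and the mean via \eqref{z2m}, \eqref{G2V} and \eqref{VV2V}, invert $z_n(m)=m+V_{\nu_n}(m)/(m-m_0(\nu_n))$ on a common neighbourhood of $\infty$, and finish with the continuity theorem for Cauchy transforms --- is the natural route and in substance the one in the cited source; note also that the parametrization hands you tightness for free, since $z\,G_{\nu_n}(z)-1=m(m-m_0)/V_{\nu_n}(m)\to 0$ uniformly in $n$ as $m\to m_0$, which is what guarantees the limit has total mass one.

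The genuine gap is exactly the point you call the ``technical heart''. Your claim that uniform convergence of $V_{\nu_n}$ near $m_0$ together with $V(m_0)>0$ forces $m_0(\nu_n)$ to stay near $m_0$ is false. Let $\nu_n$ be the semicircle law with mean $c_n\in(-\tfrac12,\tfrac12)$ and variance $1$: then $V_{\nu_n}\equiv 1$ on a fixed neighbourhood of $m_0=0$ for every $n$, so the hypotheses hold trivially, yet $m_0(\nu_n)=c_n$ is arbitrary, the families $\mathcal{K}(\nu_n)$ are genuinely distinct, and for fixed $m_1$ the measures $Q_{(m_1,\nu_n)}$ need not converge. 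The resolution is interpretive rather than analytic: as the introduction of the paper stresses, a variance function determines the generating measure only \emph{together with} its mean, so the proposition is meaningful only under the implicit normalization $m_0(\nu_n)=m_0$ for all $n$ (or at least $m_0(\nu_n)\to m_0$); this is exactly how it is invoked in Theorem \ref{TH3}, where every rescaled measure has mean exactly $1$. With that reading your Rouch\'e/inversion step goes through with a fixed pole location and the sketch is sound, modulo one smaller point: in the ``moreover'' part the densities $f_{\nu_n}(\cdot,m_1)$ of \eqref{F(V)} need not be uniformly bounded on the supports of the $\nu_n$, so it is cleaner to deduce $\mu_n\to\mu$ from $G_{\nu_n}\to G_\nu$ on all of $\mathbb{C}^+$ (which weak convergence of $\nu_n$ gives back) together with the exact formula expressing $G_{Q_{(m_1,\nu)}}$ through $G_\nu$, $m_1$ and $\V_\nu(m_1)$.
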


\section{Free multiplicative convolution}

Let $\nu$ be a probability measure with support in
$\mathbb{R}_+=[0,+\infty)$ such that $\delta=\nu(\{0\})<1$, and
consider the function
\begin{equation}\label{Psi transform}
\Psi_{\nu}(z)=\int_0^{+\infty}\frac{zx}{1-zx}\nu(dx), \ \ \ \ \ z\in\mathbb{C}\setminus \mathbb{R}_+
\end{equation}
Denoting $\mathbb{C}_+=\{x+iy\in \mathbb{C}; \ y> 0\}$, the
function  $\Psi_{\nu}$ is univalent in the left half-plane
$i\mathbb{C}^+$ and its image $\Psi_{\nu}(i\mathbb{C}^+)$ is
contained in the disc with diameter $(\nu(\{0\})-1,0)$. Moreover
$\Psi_{\nu}(i\mathbb{C}^+)\cap\mathbb{R}=(\nu(\{0\})-1,0)$. Let
$\chi_{\nu}:\Psi_{\nu}(i\mathbb{C}^+)\longrightarrow i\mathbb{C}^+$
be the inverse function of $\Psi_{\nu}$. Then the $S-$transform of
$\nu$ is the function
\begin{equation}\label{Stransform}
S_{\nu}(z)=\chi_{\nu}(z)\frac{1+z}{z},
\end{equation}
and the $\Sigma$-transform of $\nu$ is given by
$$\Sigma_{\nu}(z)=S_{\nu}\left(\frac{z}{1-z}\right),\ \ \ \ \frac{z}{1-z}\in\Psi_{\nu}(i\mathbb{C}_+).$$
The product of S-transforms is an S-transform, and the
multiplicative free convolution $\nu_1\boxtimes \nu_2$ of the
measures $\nu_1$ and $ \nu_2$ is defined by $$S_{\nu_1\boxtimes
\nu_2}(z)=S_{\nu_1}(z)S_{ \nu_2}(z).$$ The $\Sigma$-transform also
satisfies $\Sigma_{\nu_1\boxtimes
\nu_2}(z)=\Sigma_{\nu_1}(z)\Sigma_{ \nu_2}(z)$.\\
Denoting by $\mathcal{M}$ (respectively by $\mathcal{M}_+$ ) the
space of Borel probability measures on $\mathbb{R}$ (respectively on
$\mathbb{R}_+$), we say that the probability measure $\nu \in
\mathcal{M}_+$ is infinitely divisible with respect to $\boxtimes$
if for each $n\in\mathbb{N}$, there exists $\nu_n\in \mathcal{M}_+$
such that
$$\nu=\underbrace{\nu_n \boxtimes.....\boxtimes \nu_n}_{ n \  \mbox{times}}.$$
The multiplicative free convolution power $\nu^{\boxtimes\alpha}$ is
defined at least for $\alpha\geq 1$
by $S_{\nu^{\boxtimes\alpha}}(z)=S_{\nu}(z)^{\alpha}$. For more
details about the S-transform, see \cite{Bercovici-Voiculescu-93}.\\
The following technical result will be used in the proof of the
relation between the $S$-transform and the pseudo-variance function.

\begin{proposition}\label{proposition}
Let ${\mathcal{K}_{-}}(\nu)$ be the \CSK family generated by a non
degenerate probability measure $\nu$ concentrated on the positive
real line such that $\delta=\nu(\{0\})<1$. Let $m_0(\nu)$ be the
mean of $\nu$, and let $\V_{\nu}$ be the pseudo-variance function of
${\mathcal{K}_{-}}(\nu)$. If we set
$$\widetilde{z}=\widetilde{z}(m)=\psi_{\nu}(m)=\frac{1}{m+\V_{\nu}(m)/m},$$
then we have
\begin{equation}\label{PsitransformV}
\Psi_{\nu}(\widetilde{z})=\displaystyle\frac{m^2}{\V_{\nu}(m)}.
\end{equation}
Moreover  $\delta-1<\frac{m^2}{\V_{\nu}(m)}<0$, for all $m\in
(m_-(\nu),m_0(\nu))$.
\end{proposition}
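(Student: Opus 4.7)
The plan is to reduce the statement to a one-line algebraic identity relating $\Psi_{\nu}$ to the Cauchy transform $G_{\nu}$, and then invoke the fundamental formula \eqref{G2V} that links $G_{\nu}$ to the pseudo-variance function.

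First I would establish the identity
\begin{equation*}
\Psi_{\nu}(1/z) = zG_{\nu}(z) - 1,
\end{equation*}
valid for every $z$ in the natural domain. This is an immediate computation: starting from \eqref{Psi transform},
\begin{equation*}
\Psi_{\nu}(1/z) = \int_{0}^{+\infty}\frac{x/z}{1-x/z}\,\nu(dx) = \int_{0}^{+\infty}\frac{x}{z-x}\,\nu(dx) = \int_{0}^{+\infty}\frac{-(z-x)+z}{z-x}\,\nu(dx) = zG_{\nu}(z)-1.
\end{equation*}
Next, I would apply this identity at $z = z(m) = m + \mathbb{V}_{\nu}(m)/m$, so that $1/z(m) = \psi_{\nu}(m) = \widetilde{z}$. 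By \eqref{G2V}, $G_{\nu}(z(m)) = m/\mathbb{V}_{\nu}(m)$, hence
\begin{equation*}
\Psi_{\nu}(\widetilde{z}) = \left(m+\frac{\mathbb{V}_{\nu}(m)}{m}\right)\cdot\frac{m}{\mathbb{V}_{\nu}(m)}-1 = \frac{m^{2}}{\mathbb{V}_{\nu}(m)},
\end{equation*}
which is \eqref{PsitransformV}.

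For the inequality, I would argue directly from the integral form of $\Psi_{\nu}$ at $\widetilde{z}$. Since $\nu$ is concentrated on $\mathbb{R}_{+}$ we have $b(\nu)=0$, so the parameter $\widetilde{z}=\psi_{\nu}(m)$ is a negative real number whenever $m\in(m_{-}(\nu),m_{0}(\nu))$. For $\widetilde{z}<0$ and $x>0$, the quantity $\widetilde{z}x/(1-\widetilde{z}x)$ lies strictly between $-1$ and $0$ (the denominator is greater than $1$, and the numerator is negative), while the integrand vanishes at $x=0$. Splitting the integral between $\{0\}$ and $(0,+\infty)$ therefore gives
\begin{equation*}
\delta-1 = -(1-\delta) < \Psi_{\nu}(\widetilde{z}) < 0,
\end{equation*}
which by the first part is exactly $\delta-1 < m^{2}/\mathbb{V}_{\nu}(m) < 0$.

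There is no real obstacle here; the only point requiring a little care is the identification $\widetilde{z}\in(-\infty,0)$, which uses that $\theta_{-}=-\infty$ when $b(\nu)=0$, together with the fact that $\psi_{\nu}$ is the inverse of $k_{\nu}$ on $(\theta_{-},0)$. Once these parameter ranges are clear, the proof is essentially the algebraic identity above combined with the sign analysis of the integrand.
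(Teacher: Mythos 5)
Your proof is correct. For the identity \eqref{PsitransformV} you follow exactly the paper's route: the relation $\Psi_\nu(1/z)=zG_\nu(z)-1$ (which the paper asserts without the one-line integral verification you supply) evaluated at $z=m+\mathbb{V}_\nu(m)/m$, combined with \eqref{G2V}. Where you genuinely diverge is in the inequality $\delta-1<m^2/\mathbb{V}_\nu(m)<0$: the paper gets it by citing the Bercovici--Voiculescu mapping property $\Psi_{\nu}(i\mathbb{C}^+)\cap\mathbb{R}=(\nu(\{0\})-1,0)$ and reading off that the real number $\Psi_\nu(\widetilde z)$ must lie in that interval, whereas you prove the bound from scratch: you first justify that $\widetilde z=\psi_\nu(m)$ is a negative real (since $\nu$ concentrated on $\mathbb{R}_+$ gives $b(\nu)=0$ and hence $\theta_-=-\infty$), then observe that for $\widetilde z<0$ and $x>0$ the integrand $\widetilde z x/(1-\widetilde z x)$ lies strictly in $(-1,0)$ while it vanishes on the atom at $0$, so integrating against $\nu$ and using $\nu((0,\infty))=1-\delta>0$ yields the strict bounds. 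Your version is more elementary and self-contained, and it makes explicit where the hypothesis $\delta<1$ enters; the paper's version is shorter but leans on an external lemma. Both arguments are valid.
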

\begin{proof}We have that
$\Psi_{\nu}(\widetilde{z})=\frac{1}{\widetilde{z}}G_{\nu}\left(\frac{1}{\widetilde{z}}\right)-1$.
With $\widetilde{z}=\psi_{\nu}(m)=\frac{1}{m+\V_{\nu}(m)/m}$, using
\eqref{G2V}, we have that
$$\Psi_{\nu}(\psi_{\nu}(m))=\frac{1}{\psi_{\nu}(m)}G_{\nu}\left(\frac{1}{\psi_{\nu}(m)}\right)-1
=(m+\V_{\nu}(m)/m)G_{\nu}(m+\V_{\nu}(m)/m)-1
=\frac{m^2}{\V_{\nu}(m)}.$$ On the other hand, from the fact that
$\Psi_{\nu}(i\mathbb{C}_+)\cap\mathbb{R}=(\nu(\{0\})-1,0)$ (see
\cite{Bercovici-Voiculescu-93} ), Equation\eqref{PsitransformV}
implies that $\delta-1<\frac{m^2}{\V_{\nu}(m)}<0$.
\end{proof}
The following result lists some useful properties of the
$S$-transform.
\begin{proposition}\label{prop}
Let ${\mathcal{K}_{-}}(\nu)$ be a one sided \CSK family generated by
a probability measure $\nu$ concentrated on the positive real line
such that $\delta=\nu(\{0\})<1$. Then we have
 \begin{itemize}
 \item[(i)] $S_{\nu}(z)>0$ for $z\in(\delta-1,0)$, and $S_{\nu}$ is strictly decreasing on $(\delta-1,0)$.
 \item[(ii)] $S_{\nu}(\delta-1,0)=(1/m_0(\nu),1/m_-(\nu))$.
 \item[(iii)] For $m\in (m_-(\nu),m_0(\nu))$
\begin{equation}\label{StransformV}
S_{\nu}\left(\frac{m^2}{\V_{\nu}(m)}\right)=\frac{1}{m}.
\end{equation}
\item[(iv)] $\displaystyle\lim_{z\longrightarrow 0}zS_{\nu}(z)=0.$
\end{itemize}
\end{proposition}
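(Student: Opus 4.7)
The plan is to prove (iii) first since it is the pivotal identity from which (i) and (ii) follow, and then to dispatch (iv) directly from the definition of $S_\nu$. For (iii), Proposition 2.2 gives $\chi_\nu(m^2/\V_\nu(m))=\psi_\nu(m)=m/(m^2+\V_\nu(m))$ for $m\in(m_-(\nu),m_0(\nu))$. Substituting $z=m^2/\V_\nu(m)$ into $S_\nu(z)=\chi_\nu(z)(1+z)/z$ and cancelling then yields $S_\nu(m^2/\V_\nu(m))=1/m$ directly.

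For (i) and (ii), I would analyse the reparametrization $z=m^2/\V_\nu(m)$. Differentiating under the integral in the defining formula for $\Psi_\nu$ gives $\Psi_\nu'(z)=\int x/(1-zx)^2\,\nu(dx)>0$ on $(-\infty,0)$, so $\Psi_\nu$ is strictly increasing there; a direct limit computation gives $\Psi_\nu(0^-)=0$ and $\Psi_\nu(-\infty)=-\nu((0,\infty))=\delta-1$, so $\Psi_\nu$ is a strictly increasing bijection $(-\infty,0)\to(\delta-1,0)$. Since $\psi_\nu$ is strictly increasing from $(m_-(\nu),m_0(\nu))$ onto $(-\infty,0)$ (as the inverse of the strictly increasing $k_\nu$, with $\theta_-=-\infty$ because $b(\nu)=0$), Proposition 2.2 implies that $m\mapsto m^2/\V_\nu(m)$ is a strictly increasing bijection from $(m_-(\nu),m_0(\nu))$ onto $(\delta-1,0)$. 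Because $\mathrm{supp}(\nu)\subseteq\mathbb{R}_+$ forces $m_-(\nu)\ge 0$, every $m$ in the domain of means is positive; combining with (iii), $S_\nu(z)=1/m$ is positive, strictly decreasing in $z$, with range $(1/m_0(\nu),1/m_-(\nu))$.

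For (iv), I would simply write $zS_\nu(z)=\chi_\nu(z)(1+z)$ from the definition and use $\chi_\nu(0)=0$, which follows from $\Psi_\nu(0)=0$ together with the continuity of the inverse $\chi_\nu$; hence $zS_\nu(z)\to 0$ as $z\to 0$. The only genuinely delicate step is the endpoint bookkeeping in (ii), where one must align the three monotonicities (of $\Psi_\nu$ on $(-\infty,0)$, of $\psi_\nu$ on $(m_-(\nu),m_0(\nu))$, and of $m\mapsto 1/m$) and correctly identify the limit $\Psi_\nu(-\infty)=\delta-1$ in order to pin down the endpoint $1/m_-(\nu)$ of the range.
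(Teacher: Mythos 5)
Your proposal is correct, and for parts (i), (ii) and (iv) it takes a genuinely different route from the paper. The paper proves (iii) exactly as you do (via Proposition \ref{proposition} and the definition \eqref{Stransform}), but it obtains (i) and (ii) by citation: positivity and strict monotonicity of $S_\nu$ come from Bercovici--Voiculescu and Haagerup--M\"oller, the range identity $S_{\nu}(\delta-1,0)=(b^{-1},a^{-1})$ from Haagerup--M\"oller, and the identification $a^{-1}=1/m_-(\nu)$ from an earlier paper of Fakhfakh; for (iv) it invokes the limits $m/\V_\nu(m)\to 0$ and $m^2/\V_\nu(m)\to 0$ as $m\to m_0$ from Bryc--Hassairi and then uses \eqref{StransformV}. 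You instead make the argument self-contained: differentiating $\Psi_\nu$ under the integral shows it is a strictly increasing bijection from $(-\infty,0)$ onto $(\delta-1,0)$, and composing with the strictly increasing $\psi_\nu:(m_-(\nu),m_0(\nu))\to(-\infty,0)$ via \eqref{PsitransformV} turns $m\mapsto m^2/\V_\nu(m)$ into a strictly increasing bijection onto $(\delta-1,0)$; then (i) and (ii) fall out of $S_\nu(z)=1/m$ together with $m_-(\nu)\ge 0$, and (iv) follows from $\chi_\nu(z)\to 0$ as $z\to 0^-$. Your route has the advantage of proving Corollary \ref{corollary} (surjectivity onto $(\delta-1,0)$ included) along the way and of replacing four external citations by elementary computations; the paper's route is shorter on the page and leans on results already established in the literature. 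The one point worth making explicit in your write-up is that the real interval $(-\infty,0)$ lies in the left half-plane $i\mathbb{C}^+$ on which $\Psi_\nu$ is univalent, so that your real-variable inverse of $\Psi_\nu|_{(-\infty,0)}$ is indeed the restriction of $\chi_\nu$; with that remark the argument is complete.
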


\begin{proof}
(i)This is proved in \cite[Proposition
6.8]{Bercovici-Voiculescu-93}, see also  \cite[Lemma 2]{Haagerup-Moller}.

 (ii) It is shown in
\cite[Lemma 4]{Haagerup-Moller} that
$S_{\nu}(\delta-1,0)=(b^{-1}, a^{-1})$, where
$$a=\left(\int_0^{+\infty}x^{-1}\nu(dx)\right)^{-1}, \ \ \ \textrm{and} \ \ \  b =\int_0^{+\infty}x\nu(dx).$$
On the other hand, from \cite[Corollary 2.3]{Raouf-17}, we have that
$$a^{-1}=\int_0^{+\infty}x^{-1}\nu(dx)=\widetilde{m}_0=-G_\nu(0)=1/m_-(\nu).$$ This
implies that
$$S_{\nu}(\delta-1,0)=(b^{-1}, a^{-1})=(1/m_0(\nu),1/m_-(\nu)).$$
(iii) As $\chi_{\nu}$ is the inverse function of $\Psi_{\nu}$, then
\eqref{PsitransformV}
 implies that $\chi_{\nu}(m^2/\V_{\nu}(m))=\psi_{\nu}(m).$ Using \eqref{Stransform}, we get
$$S_{\nu}\left(\frac{m^2}{\V_{\nu}(m)}\right)= \chi_{\nu}(m^2/\V_{\nu}(m))\frac{1+m^2/\V_{\nu}(m)}{m^2/\V_{\nu}(m)}=1/m.$$

(iv) According to \cite[Corollary 3.6]{Bryc-Hassairi-09}, we have
that if $\V_{\nu}$ is the pseudo-variance function of a \CSK family
generated by probability measure with support bounded from one side
(say from above), then $$\frac{m}{\V_{\nu}(m)}\longrightarrow 0\ \
\textrm{and }\ \ \frac{m^2}{\V_{\nu}(m)}\longrightarrow 0 \ \
\textrm{as} \ m\longrightarrow m_0.$$ Using \eqref{StransformV} for
$z=m^2/\V_{\nu}(m)$, this implies that
$zS_{\nu}(z)=m/\V_{\nu}(m)\longrightarrow 0$ as $m\longrightarrow
m_0$.
\end{proof}
As a consequence of the previous proposition, we deduce the
following property of the pseudo-variance function $\V$.
\begin{corollary}\label{corollary}
The function $m\longmapsto m^2/\V_{\nu}(m)$ is analytic and strictly
increasing on the domain of the mean $(m_-(\nu),m_0(\nu))$.
\end{corollary}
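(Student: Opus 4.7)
The plan is to read off both properties directly from \eqref{StransformV} together with Proposition \ref{prop}(i)--(ii). The key preliminary observation is that since $\nu$ is concentrated on $[0,+\infty)$ with $\nu(\{0\})<1$, every $Q_{(m,\nu)}\in\mathcal{K}_-(\nu)$ is absolutely continuous with respect to $\nu$ and hence also supported in $[0,+\infty)$. This forces $m>0$ throughout the open interval $(m_-(\nu),m_0(\nu))$, so $m\mapsto 1/m$ is a well-defined strictly decreasing real-analytic function there.

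For strict monotonicity, I would invoke Proposition \ref{prop}(i)--(ii): $S_\nu$ is a strictly decreasing bijection from $(\delta-1,0)$ onto $(1/m_0(\nu),1/m_-(\nu))$, so its inverse $S_\nu^{-1}$ on the image is also strictly decreasing. Rewriting \eqref{StransformV} as
$$\frac{m^2}{\V_\nu(m)}=S_\nu^{-1}\!\left(\frac{1}{m}\right),$$
the map $m\mapsto m^2/\V_\nu(m)$ appears as the composition of two strictly decreasing functions and is therefore strictly increasing on $(m_-(\nu),m_0(\nu))$.

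For analyticity, the parametrization $k_\nu$ of \eqref{L2m} is analytic with nowhere-vanishing derivative on $(\theta_-,0)$, in Bryc's framework this positivity corresponds to the positivity of the free variance of the non-degenerate measure $P_{(\theta,\nu)}$. Consequently $\psi_\nu=k_\nu^{-1}$ is analytic on the domain of means, and by \eqref{m2v} so is $\V_\nu$. Proposition \ref{proposition} moreover ensures $m^2/\V_\nu(m)\in(\delta-1,0)$, in particular $\V_\nu(m)\neq 0$ throughout the interval, so $m\mapsto m^2/\V_\nu(m)$ is analytic.

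The only genuine step is confirming the positivity of $m$, which is what makes the right-hand side $S_\nu^{-1}(1/m)$ meaningful and places it in the range where Proposition \ref{prop}(i) applies. Once that point is noted, the corollary is essentially a direct translation of the bijection properties of $S_\nu$ already catalogued in Proposition \ref{prop}.
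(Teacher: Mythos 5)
Your proof is correct, but it decomposes the map differently from the paper. The paper's proof writes $m^2/\V_{\nu}(m)=\chi_{\nu}^{-1}(\psi_{\nu}(m))$, i.e.\ it views the function as $\Psi_{\nu}\circ\psi_{\nu}$, and concludes by composing two strictly increasing analytic maps, quoting the monotonicity and analyticity of $\chi_{\nu}$ on $(\delta-1,0)$ from Bercovici--Voiculescu. You instead invert \eqref{StransformV} to get $m^2/\V_{\nu}(m)=S_{\nu}^{-1}(1/m)$ and compose two strictly \emph{decreasing} maps; this requires the additional (and correctly supplied) observations that $m>0$ throughout $(m_-(\nu),m_0(\nu))$ because $\nu$ is carried by $[0,+\infty)$ with $\nu(\{0\})<1$, and that $1/m$ lands exactly in $S_{\nu}\bigl((\delta-1,0)\bigr)=(1/m_0(\nu),1/m_-(\nu))$ by Proposition \ref{prop}(ii), so that $S_{\nu}^{-1}(1/m)$ is well defined. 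Your route has the advantage of staying entirely within the paper's own Proposition \ref{prop} rather than reaching back to the properties of $\chi_{\nu}$ in the external reference; the paper's route is shorter because composing with $\psi_{\nu}$ makes the domain matching automatic. Your analyticity argument also differs slightly: you obtain it from $\V_{\nu}$ being analytic and nonvanishing (via \eqref{m2v}, the analyticity of $k_{\nu}$ in \eqref{L2m}, and the sign information in Proposition \ref{proposition}) rather than from the composition itself, but both arguments are sound.
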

\begin{proof}
From \eqref{StransformV}, we have that
$m^2/\V_{\nu}(m)=\chi_{\nu}^{-1}(\psi_{\nu}(m))$. We already know
that $\psi_{\nu}$ is analytic and strictly increasing, and as
$\chi_{\nu}$ is also analytic and strictly increasing on
$(\delta-1,0)$ (see\cite{Bercovici-Voiculescu-93}), we deduce that
the function $m\longmapsto m^2/\V_{\nu}(m)$ is analytic and strictly
increasing $(m_-(\nu),m_0(\nu))$.
\end{proof}
 We now state and prove our main result concerning the
effect of the free multiplicative convolution on a \CSK family.
\begin{theorem}\label{TH}
Let $\V_{\nu}$ be the pseudo-variance function of the \CSK family
${\mathcal{K}_{-}}(\nu)$ generated by a non degenerate  probability
distribution $\nu$ concentrated on the positive real line with mean
$m_0(\nu)$. Consider $\alpha>0$ such that $\nu^{\boxtimes \alpha}$
is defined. Then
 \begin{itemize}
\item[(i)] $m_-(\nu^{\boxtimes \alpha})=(m_-(\nu))^{\alpha}$ and $m_0(\nu^{\boxtimes
\alpha})=(m_0(\nu))^{\alpha}$, and for $m\in(m_-(\nu^{\boxtimes
\alpha}),m_0(\nu^{\boxtimes \alpha})),$
\begin{equation}\label{pseudovarnualpha}
\V_{\nu^{\boxtimes \alpha}}(m)=m^{2-2/\alpha}\V_{\nu}\left(m^{1/\alpha}\right).
\end{equation}
\item[(ii)] If $m_0<+\infty$, then the variance functions of the \CSK
families generated by $\nu$ and $\nu^{\boxtimes \alpha}$ exist and
\begin{equation}\label{varnualpha}
V_{\nu^{\boxtimes \alpha}}(m)=\frac{m-m_0^{\alpha}}{m^{1/\alpha}-m_0}m^{1-1/\alpha}V_{\nu}\left(m^{1/\alpha}\right).
\end{equation}
 \end{itemize}
\end{theorem}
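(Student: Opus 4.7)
The plan is to reduce everything to the identity \eqref{StransformV}, namely $S_\nu(m^2/\V_\nu(m))=1/m$, and use injectivity of the $S$-transform supplied by Proposition~\ref{prop}(i), together with the multiplicativity $S_{\nu^{\boxtimes\alpha}}(z)=S_\nu(z)^\alpha$ that defines the convolution power.

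For part (i), I would apply \eqref{StransformV} to $\nu^{\boxtimes\alpha}$ at a point $M$ in its domain of means to get
\[
S_{\nu^{\boxtimes\alpha}}\!\left(\tfrac{M^2}{\V_{\nu^{\boxtimes\alpha}}(M)}\right)=\tfrac{1}{M}.
\]
Rewriting the left side as $S_\nu\!\bigl(M^{2}/\V_{\nu^{\boxtimes\alpha}}(M)\bigr)^\alpha$ and taking the (positive) $\alpha$-th root yields $S_\nu\!\bigl(M^{2}/\V_{\nu^{\boxtimes\alpha}}(M)\bigr)=M^{-1/\alpha}$. Setting $m=M^{1/\alpha}$, the same identity \eqref{StransformV} for $\nu$ reads $S_\nu\!\bigl(M^{2/\alpha}/\V_\nu(M^{1/\alpha})\bigr)=M^{-1/\alpha}$. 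Since $S_\nu$ is strictly monotone (Proposition~\ref{prop}(i)), the two arguments must coincide, giving $M^2/\V_{\nu^{\boxtimes\alpha}}(M)=M^{2/\alpha}/\V_\nu(M^{1/\alpha})$, which rearranges to \eqref{pseudovarnualpha}.

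For the endpoint identifications $m_0(\nu^{\boxtimes\alpha})=m_0(\nu)^\alpha$ and $m_-(\nu^{\boxtimes\alpha})=m_-(\nu)^\alpha$, I would use Proposition~\ref{prop}(ii): as $z\to 0^-$, $S_\nu(z)\to 1/m_0(\nu)$, whence $S_{\nu^{\boxtimes\alpha}}(z)=S_\nu(z)^\alpha\to 1/m_0(\nu)^\alpha$, forcing $m_0(\nu^{\boxtimes\alpha})=m_0(\nu)^\alpha$; the analogous limit at the other endpoint of $(\delta-1,0)$ gives $m_-(\nu^{\boxtimes\alpha})=m_-(\nu)^\alpha$. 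Equivalently, since \eqref{pseudovarnualpha} requires $M^{1/\alpha}\in(m_-(\nu),m_0(\nu))$, the natural domain is $\bigl(m_-(\nu)^\alpha,m_0(\nu)^\alpha\bigr)$, and one reconciles this with the intrinsic definition of $m_0$ and $m_-$ via the $S$-transform.

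For part (ii), since $m_0<+\infty$, the relation \eqref{VV2V} applies to both $\nu$ and $\nu^{\boxtimes\alpha}$. I would substitute $\V_\nu(m)=V_\nu(m)\,m/(m-m_0)$ and $\V_{\nu^{\boxtimes\alpha}}(M)=V_{\nu^{\boxtimes\alpha}}(M)\,M/(M-m_0(\nu^{\boxtimes\alpha}))$ into \eqref{pseudovarnualpha}, use $m_0(\nu^{\boxtimes\alpha})=m_0^\alpha$ from (i), and solve for $V_{\nu^{\boxtimes\alpha}}(M)$; the algebra collapses directly to \eqref{varnualpha}. The main subtle point in the whole argument is justifying that the substitution $m\leftrightarrow M=m^\alpha$ really respects the (possibly different) intervals $(\delta-1,0)$ and $(\delta'-1,0)$ on which $S_\nu$ and $S_{\nu^{\boxtimes\alpha}}$ are defined, so that Proposition~\ref{prop}(i) is legitimately invoked; this is handled by Corollary~\ref{corollary}, which tells us that $m\mapsto m^2/\V_\nu(m)$ is a strictly increasing bijection onto the relevant interval, together with the hypothesis that $\nu^{\boxtimes\alpha}$ exists (so $S_\nu^\alpha$ is indeed an $S$-transform on the appropriate subinterval).
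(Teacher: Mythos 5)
Your proposal is correct and follows essentially the same route as the paper: both derive \eqref{pseudovarnualpha} by equating $S_\nu$ evaluated at $m^2/\V_{\nu^{\boxtimes\alpha}}(m)$ and at $m^{2/\alpha}/\V_\nu(m^{1/\alpha})$ via \eqref{StransformV}, the multiplicativity $S_{\nu^{\boxtimes\alpha}}=S_\nu^\alpha$, and the injectivity from Proposition~\ref{prop}(i), obtain the endpoints from Proposition~\ref{prop}(ii), and deduce (ii) by substituting \eqref{VV2V}. Your added attention to the domain-matching issue is a welcome refinement but does not change the argument.
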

\begin{proof}
(i) The fact that $m_0(\nu^{\boxtimes \alpha})=(m_0(\nu))^{\alpha}$
and $m_-(\nu^{\boxtimes \alpha})=(m_-(\nu))^{\alpha}$ comes from
Proposition \ref{prop} and from the multiplicative property of the
S-transform. On the the other hand, for
$m\in((m_-(\nu))^{\alpha},(m_0(\nu))^{\alpha})$, we have that
$m^{1/\alpha}\in(m_-(\nu),m_0(\nu))$ and $m^2/\V_{\nu^{\boxtimes
\alpha}}(m)\in(\delta-1,0)$. We apply \eqref{StransformV} and the
multiplicative property of the S-transform to see that
 $$S_{\nu}\left(\frac{m^2}{\V_{\nu^{\boxtimes \alpha}}(m)}\right)=\left[S_{\nu^{\boxtimes \alpha}}\left(\frac{m^2}{\V_{\nu^{\boxtimes \alpha}}(m)}\right)\right]^{1/\alpha}=\frac{1}{m^{1/\alpha}}=S_{\nu}\left(\frac{m^{2/\alpha}}{\V_{\nu}(m^{1/\alpha})}\right).$$
 This implies that
 $$\frac{m^2}{\V_{\nu^{\boxtimes \alpha}}(m)}=\frac{m^{2/\alpha}}{\V_{\nu}(m^{1/\alpha})}.$$

 (ii) If $m_0<+\infty$, then the variance functions of the \CSK families
  ${\mathcal{K}_{-}}(\nu)$ and ${\mathcal{K}_{-}}(\nu^{\boxtimes \alpha})$ exist and the relation \eqref{varnualpha} follows from \eqref{VV2V} and \eqref{pseudovarnualpha}.
\end{proof}

\section{Limit theorems.}

Several limit theorems involving the free additive convolution
$\boxplus$, the Boolean additive convolution $\uplus$ and the free
multiplicative convolution $\boxtimes$ have been established in
\cite{W. Mlotkowski} and in \cite{Noriyoshi Sakuma-Hiroaki Yoshida}.
In this section, we use the variance functions to re-derive these
results. According to Proposition \ref{proposition2}, this leads to
some new variance functions with non usual form. Recall that the
definition of the free additive convolution is based on the notion
of ${\mathcal{R}}$-transform (see \cite{Bercovici-Voiculescu-93}).
In fact, it is proved (\cite{Bercovici-Voiculescu-93}) that the
inverse $G_\nu^{-1}$ of $G_{\nu}$ is defined on a domain of the form
 $$\{z\in\mathbb{C}\ :\ \Re z> c,\ | z|< M\},$$
where $c$ and $M$ are two positive constants. \\The
${\mathcal{R}}$-transform is defined in the same domain by
\begin{equation}
{\mathcal{R}}_\nu(z)=G_\nu^{-1}(z)-1/z,
\end{equation}
and the free additive convolution  $\mu\boxplus\nu$ of the
probability measures $\mu$, $\nu$ on Borel sets of the real line is
a uniquely defined probability measure $\mu\boxplus\nu$ such that
\begin{equation}\label{property lin R-trans}
{\mathcal{R}}_{\mu\boxplus\nu}(z)={\mathcal{R}}_{\mu}(z)+{\mathcal{R}}_{\nu}(z)
\end{equation}
for all $z$ in an appropriate domain (see \cite[Sect.
5]{Bercovici-Voiculescu-93} for details).\\ A probability measure
$\nu \in \mathcal{M}$ is $\boxplus-$infinitely divisible, if for
each $n\in\mathbb{N}$, there exists $\nu_n\in \mathcal{M}$ such that
$$\nu=\underbrace{\nu_n \boxplus.....\boxplus \nu_n}_{ n \  \mbox{times}}.$$
Concerning the Boolean additive convolution, its definition uses the
notion of $K$-transform (see \cite{Speicher-Woroudi-93}). If $\nu$
is a probability measure on $\mathbb{R}$, then the $K$-transform of
$\nu$ is given by
\begin{equation}\label{Knu}
K_{\nu}(z)=z-\frac{1}{G_{\nu}(z)}, \ \ \ \ for \ z\in \mathbb{C}^+.
\end{equation}
The function $K_{\nu}$ is usually called self energy, it represents
the analytic backbone of the boolean additive convolution. \\ For
two probability measures $\mu$ and $\nu$, the additive Boolean
convolution $\mu\uplus\nu$ is the probability measure defined by
\begin{equation}\label{Kuplus}
K_{\mu\uplus\nu}(z)=K_{\mu}(z)+K_{\nu}(z), \ \ \ \ \mbox{for}\ \ \ z\in
\mathbb{C}^+.
\end{equation}
A probability measure $\nu \in \mathcal{M}$ is infinitely divisible
in the boolean sense if for each $n\in\mathbb{N}$, there exists
$\nu_n\in \mathcal{M}$ such that
$$\nu=\underbrace{\nu_n \uplus.....\uplus \nu_n}_{ n \  \mbox{times}}.$$

We are now in position to state and prove the main result of the
section.
\begin{theorem}\label{TH3}
Let $\nu$ be in $\mathcal{M}_+$ with mean $m_0(\nu)>0$. Suppose that
$\nu$ has a finite second moment. Then denoting
$\gamma=\frac{Var(\nu)}{(m_0(\nu))^2}=\frac{V_{\nu}(m_0)}{m_0^2}$,
we have
\begin{itemize}

 \item[(i)]
 $$D_{1/(nm_0^n)}\left(\nu^{\boxtimes n}\right)^{\boxplus n} \xrightarrow{n\to +\infty} \eta_{\gamma} \ \
   \ \ \ \  \ \mbox{in distribution},$$
where $\eta_{\gamma}$ is such that $m_0(\eta_{\gamma})=1$,
$(m_-(\eta_{\gamma}),m_0(\eta_{\gamma}))\subset (0,1)$ and
 the variance function of the \CSK family generated by $\eta_{\gamma}$ is given for $m\in (m_-(\eta_{\gamma}),m_0(\eta_{\gamma}))$,by
\begin{equation}\label{V-eta-gamma}
 V_{\eta_{\gamma}}(m)=\frac{m(m-1)}{m_0^2\ln(m)}V_{\nu}(m_0)=\frac{\gamma m(m-1)}{\ln(m)}.
 \end{equation}
 \item[(ii)]  $$D_{1/(nm_0^n)}\left(\nu^{\boxtimes n}\right)^{\uplus n}\xrightarrow{n\to +\infty} \sigma_{\gamma} \ \ \ \ \ \ \ \ \mbox{in distribution},$$
where $\sigma_{\gamma}$ is such that
$m_0(\sigma_{\gamma})=1$,
$(m_-(\sigma_{\gamma}),m_0(\sigma_{\gamma}))\subset
(0,1)$, and
 for all  $m\in (m_-(\sigma_{\gamma}),m_0(\sigma_{\gamma}))$, the variance function of the \CSK family generated by $\sigma_{\gamma}$
is given by
 \begin{equation}\label{V-sigma-gamma}
V_{\sigma_{\gamma}}(m)=\frac{m(m-1)}{m_0^2 \ln(m)}V_{\nu}(m_0)+m(1-m)=\frac{\gamma m(m-1)}{ \ln(m)}+m(1-m)
  \end{equation}
\end{itemize}
\end{theorem}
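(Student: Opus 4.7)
The plan is to apply Bryc's continuity theorem (Proposition \ref{proposition2}) to the sequences on the two left-hand sides. The dilation by $1/(nm_0^n)$ is designed precisely so that every prelimit has mean $1$: indeed $\nu^{\boxtimes n}$ has mean $m_0^n$ by multiplicativity of the mean under $\boxtimes$, and both $\boxplus n$ and $\uplus n$ multiply the mean by $n$. It therefore suffices, in each part, to compute the variance function $V_n$ of the $n$-th prelimit, show that it converges to the stated limit locally uniformly on a complex neighborhood of $m=1$, and verify that the limit is strictly positive at $m=1$; Proposition \ref{proposition2} then gives convergence in distribution to the mean-$1$ element of the CSK family generated by $\eta_\gamma$ (respectively $\sigma_\gamma$).

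For part (i), I would chain three explicit formulas. First, Theorem \ref{TH}(ii) with $\alpha=n$ gives an expression for $V_{\nu^{\boxtimes n}}$. Next, the additive power rule (\ref{Vbox+alpha}) yields $V_{(\nu^{\boxtimes n})^{\boxplus n}}(m)=n\,V_{\nu^{\boxtimes n}}(m/n)$. Finally, the dilation formula (\ref{pseudo-var affine transf}) with $\beta=nm_0^n$, $\lambda=0$ produces
\begin{equation*}
V_n(m)=\frac{1}{nm_0^{2n}}\,V_{\nu^{\boxtimes n}}(m_0^n m).
\end{equation*}
Substituting Theorem \ref{TH}(ii) evaluated at $m'=m_0^n m$, and noting that $(m')^{1/n}=m_0\,m^{1/n}$, one obtains after simplification
\begin{equation*}
V_n(m)=\frac{(m-1)\,m^{1-1/n}}{nm_0^2\,(m^{1/n}-1)}\,V_\nu\!\bigl(m_0\,m^{1/n}\bigr).
\end{equation*}
Passing to the limit using $n(m^{1/n}-1)\to\ln m$, $m^{1-1/n}\to m$, and continuity of $V_\nu$ at $m_0$ gives exactly $V_{\eta_\gamma}(m)=\gamma\, m(m-1)/\ln m$. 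Since $\lim_{m\to 1}(m-1)/\ln m=1$, the value $V_{\eta_\gamma}(1)=\gamma>0$, so Proposition \ref{proposition2} applies.

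Part (ii) is handled identically, but with the Boolean power law (\ref{VU+alpha}) replacing (\ref{Vbox+alpha}) at the second step. Using that $\nu^{\boxtimes n}$ has mean $m_0^n$, formula (\ref{VU+alpha}) contributes an extra term which after the dilation becomes
\begin{equation*}
\frac{1}{n^2 m_0^{2n}}\cdot nm_0^n m\,(nm_0^n m-nm_0^n)\,(1/n-1)=m(m-1)(1/n-1),
\end{equation*}
which tends to $m(1-m)$ and accounts for the additional summand in (\ref{V-sigma-gamma}). The positivity $V_{\sigma_\gamma}(1)=\gamma>0$ follows as in part (i).

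The main obstacle is purely bookkeeping: three nested transformations (free multiplicative power, additive/Boolean power, dilation) must be composed and the powers of $m_0$ must cancel so that a well-defined, mean-one analytic limit survives. The analytic input required by Proposition \ref{proposition2} — that convergence be uniform on a complex neighborhood of $m=1$ — follows because each $V_n$ is analytic on a neighborhood of the positive real axis and the elementary limits $n(m^{1/n}-1)\to\ln m$, $m^{1-1/n}\to m$ are locally uniform on compact subsets of $\{\operatorname{Re} m>0\}$, while continuity of $V_\nu$ at $m_0$ is granted by the finite second moment hypothesis.
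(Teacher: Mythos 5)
Your proposal follows essentially the same route as the paper: compose the dilation formula \eqref{pseudo-var affine transf}, the additive (resp.\ Boolean) power rule \eqref{Vbox+alpha} (resp.\ \eqref{VU+alpha}), and Theorem \ref{TH}(ii), then pass to the limit $n(m^{1/n}-1)\to\ln m$ and invoke Proposition \ref{proposition2}; your explicit checks that $V(1)=\gamma>0$ and that the convergence is locally uniform are points the paper leaves implicit. The only part of the statement you do not address is the containment $(m_-,m_0)\subset(0,1)$, which the paper obtains by observing that the weak limit of measures in $\mathcal{M}_+$ remains in $\mathcal{M}_+$, so every mean in the limiting CSK family is positive while $m_0=1$.
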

\begin{proof}
 (i)
We have that
$$
m_0\left(D_{1/(nm_0(\nu)^n)}\left(\nu^{\boxtimes n}\right)^{\boxplus n}\right)
=  \frac{1}{(nm_0(\nu)^n)}m_0\left(\left(\nu^{\boxtimes n}\right)^{\boxplus n}\right)=\frac{1}{(m_0(\nu))^n}m_0\left(\nu^{\boxtimes n}\right)=1.
$$
Furthermore,
\begin{eqnarray*}
V_{D_{\frac{1}{nm_0^n}}((\nu^{\boxtimes n})^{\boxplus n})}(m)
& = & \frac{1}{n^2m_0^{2n}}V_{(\nu^{\boxtimes n})^{\boxplus n}}(nmm_0^n)\\
& = & \frac{1}{nm_0^{2n}}V_{\nu^{\boxtimes n}}(mm_0^n)\\
& = &\frac{m^{1-1/n}}{nm_0^2}\frac{(m-1)}{m^{1/n}-1}V_{\nu}\left(m_0m^{1/n}\right)\\
& = & \frac{m^{1-1/n}(m-1)}{m_0^2\frac{m^{1/n}-1}{1/n}}V_{\nu}\left(m_0m^{1/n}\right)\\
& \xrightarrow{n\to +\infty} & \frac{m(m-1)}{m_0^2\left(\exp(x\ln(m))'|_{x=0}\right)}V_{\nu}(m_0)\\
& = & \frac{m(m-1)}{m_0^2\ln(m)}V_{\nu}(m_0).
\end{eqnarray*}
According to Proposition \ref{proposition2}, this implies that
$$D_{1/(nm_0^n)}\left(\nu^{\boxtimes n}\right)^{\boxplus n} \xrightarrow{n\to +\infty} \eta_{\gamma} \ \
  \ \ \ \  \ \mbox{in distribution},$$
  where $$V_{\eta_{\gamma}}(m)=\frac{m(m-1)}{m_0^2\ln(m)}V_{\nu}(m_0)=\frac{\gamma m(m-1)}{\ln(m)},$$
  and $m_0(\eta_{\gamma})=m_0\left(D_{1/(nm_0(\nu)^n)}\left(\nu^{\boxtimes n}\right)^{\boxplus
  n}\right)=1$.\\
On the other hand, it is well known that if a sequence of
probability measures $\mu_n$ in $\mathcal{M}_+$ is such that
$\mu_n\xrightarrow{n\to +\infty} \mu$ in distribution, then $\mu\in
\mathcal{M}_+$. \\Therefore $\eta_{\gamma} \in\mathcal{M}_+$ and
$m=\int_0^{+\infty} x Q_{(m,\eta_{\gamma})}(dx)>0$. \\This with the
fact that $m_0(\eta_{\gamma})=1$ implies that
$(m_-(\eta_{\gamma}),m_0(\eta_{\gamma}))\subset (0,1)$.

(ii)

 From the fact that $m_0(\nu^{\uplus n})=nm_0(\nu)$ (see
\cite[Theorem 2.3]{Raouf-19}), we have that
$$
m_0\left(D_{1/(nm_0(\nu)^n)}\left(\nu^{\boxtimes n}\right)^{\uplus n}\right)
=  \frac{1}{(nm_0(\nu)^n)}m_0\left(\left(\nu^{\boxtimes n}\right)^{\uplus n}\right)=\frac{1}{(m_0(\nu))^n}m_0\left(\nu^{\boxtimes n}\right)=1.
$$
Furthermore,
\begin{eqnarray*}
V_{D_{\frac{1}{nm_0^n}}((\nu^{\boxtimes n})^{\uplus n})}(m)
 & = &  \frac{1}{n^2m_0^{2n}}V_{(\nu^{\boxtimes n})^{\uplus n}}(nmm_0^n)\\
 & = &  \frac{1}{nm_0^{2n}}V_{\nu^{\boxtimes n}}(mm_0^n)+m(1-m)(1-1/n)\\
 & = &  \frac{(mm_0^n-m_0^n)m^{1-1/n}m_0^{n-1}V_{\nu}\left(m^{1/n}m_0\right)}{nm_0^{2n}[(mm_0^n)^{1/n}-m_0]}
 + m \left(1-m\right)\left(1-1/n\right).\\
 & = & \frac{(m-1)m^{1-1/n}}{m_0^2\frac{m^{1/n}-1}{1/n}}
 V_{\nu}\left(m^{1/n}m_0\right)+  m\left(1-m\right)\left(1-1/n\right).\\
 & \xrightarrow{n\to +\infty} & \frac{m(m-1)}{m_0^2 \ln(m)}V_{\nu}(m_0)+m(1-m).
\end{eqnarray*}
According to Proposition \ref{proposition2}, this implies that
$$D_{1/(nm_0^n)}\left(\nu^{\boxtimes n}\right)^{\uplus n} \xrightarrow{n\to +\infty} \sigma_{\gamma} \ \
  \ \ \ \  \ \mbox{in distribution},$$
  where $$V_{\sigma_{\gamma}}(m)=\frac{m(m-1)}{m_0^2 \ln(m)}V_{\nu}(m_0)+m(1-m)=\frac{\gamma m(m-1)}{ \ln(m)}+m(1-m),$$
  and $m_0(\sigma_{\gamma})=m_0\left(D_{1/(nm_0(\nu)^n)}\left(\nu^{\boxtimes n}\right)^{\uplus
  n}\right)=1$.\\
Now the fact that $\sigma_{\gamma} \in\mathcal{M}_+$ implies that
 $m=\int_0^{+\infty} x Q_{(m,\sigma_{\gamma})}(dx)>0$, and
given that $m_0(\sigma_{\gamma})=1$, we deduce that
 $(m_-(\sigma_{\gamma}),m_0(\sigma_{\gamma}))\subset (0,1)$.
\end{proof}
It is worth mentioning here that according to \cite{Noriyoshi
Sakuma-Hiroaki Yoshida}, the S-transform of the limit distribution
$\eta_{\gamma}$ is given by
$$S_{\eta_{\gamma}}(z)=\exp(-\gamma z).$$
with $m_0(\eta_{\gamma})=1/S_{\eta_{\gamma}}(0)=1$. For
$z=m^2/\V_{\eta_{\gamma}}(m)$, \eqref{StransformV} becomes
$$1/m=S_{\eta_{\gamma}}\left(m^2/\V_{\eta_{\gamma}}(m)\right)=\exp\left(-\gamma m^2/\V_{\eta_{\gamma}}(m)\right).$$
This implies that $$\V_{\eta_{\gamma}}(m)=\frac{\gamma m^2}{\ln
(m)},$$ and consequently, the variance function of the \CSK family
generated by $\eta_{\gamma}$ is given by
$$V_{\eta_{\gamma}}(m)=\frac{m-m_0(\eta_{\gamma})}{m}\V_{\eta_{\gamma}}(m)=\frac{m-m_0(\eta_{\gamma})}{m}\frac{\gamma m^2}{\ln (m)}=\frac{\gamma m(m-1)}{\ln(m)}$$
which is nothing but \eqref{V-eta-gamma}.

Also the $\Sigma$-transform of the limit distribution
$\sigma_{\gamma}$ is given in \cite{Noriyoshi Sakuma-Hiroaki
Yoshida} by
$$\Sigma_{\sigma_{\gamma}}(z)=\exp(-\gamma z),$$
with
$m_0(\sigma_{\gamma})=1/S_{\sigma_{\gamma}}(0)=1/\Sigma_{\sigma_{\gamma}}(0)=1$.
Setting $z=\frac{m^2}{\V_{\sigma_{\gamma}}(m)+m^2}$, we get
$$\exp\left(-\gamma \frac{m^2}{\V_{\sigma_{\gamma}}(m)+m^2}\right)=\Sigma_{\sigma_{\gamma}}\left
(\frac{m^2}{\V_{\sigma_{\gamma}}(m)+m^2}\right)=S_{\sigma_{\gamma}}(m^2/\V_{\sigma_{\gamma}}(m))=1/m.$$
This implies that $ \frac{\gamma
m^2}{\V_{\sigma_{\gamma}}(m)+m^2}=\ln(m)$, that is
$$\V_{\sigma_{\gamma}}(m)=\frac{\gamma m^2}{\ln(m)}-m^2.$$
The variance function of the \CSK family generated by
$\sigma_{\gamma}$ is then given by
$$V_{\sigma_{\gamma}}(m)=\frac{m-m_0(\sigma_{\gamma})}{m}\V_{\sigma_{\gamma}}(m)
=\frac{m-1}{m}\left(\frac{\gamma m^2}{\ln(m)}-m^2\right),$$ which is
\eqref{V-sigma-gamma}.
\begin{corollary}\label{cor1}
For the free Poisson law $\mu(dx)=\frac{1}{2\pi}\sqrt{\frac{4-x}{x}}\textbf{1}_{(0,4)}(x)dx$, we have
$$D_{1/n}\left(\mu^{\boxtimes n}\right)^{\uplus n}\xrightarrow{n\to +\infty} \sigma_1  \ \ \ \ \mbox{in distribution},$$
with  $m_0(\sigma_1)=1$, and for all $m\in
(m_-(\sigma_1),m_0(\sigma_1))$  the variance function of the \CSK
family generated by $\sigma_1$ is
$$V_{\sigma_1}(m)=\frac{ m(m-1)}{ \ln(m)}+m(1-m).$$
\end{corollary}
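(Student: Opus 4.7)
The plan is to deduce the corollary as an immediate specialization of Theorem \ref{TH3}(ii) to the free Poisson (Marchenko--Pastur) law $\mu$. Since $\mu$ is supported on the compact interval $(0,4) \subset \mathbb{R}_+$, it lies in $\mathcal{M}_+$ and has moments of every order, so the hypotheses of Theorem \ref{TH3}(ii) are satisfied with $\nu = \mu$.

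The next step is to compute the first two moments of $\mu$, which fixes the constant $\gamma = V_\mu(m_0)/m_0^2$ appearing in \eqref{V-sigma-gamma}. For the mean, completing the square $4x - x^2 = 4 - (x-2)^2$ converts the integral into the area of a semicircular disk of radius $2$:
\begin{equation*}
m_0(\mu) = \int_0^4 \frac{x}{2\pi}\sqrt{\frac{4-x}{x}}\,dx = \frac{1}{2\pi}\int_0^4 \sqrt{4-(x-2)^2}\,dx = 1.
\end{equation*}
For the second moment, the substitution $x = 4\sin^2\theta$ reduces $\int_0^4 x^2\,\mu(dx)$ to a beta-function integral that evaluates to $2$. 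Hence $\mathrm{Var}(\mu) = 2 - 1 = 1$, and consequently $\gamma = 1$ and $V_\mu(m_0(\mu)) = 1$.

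Finally, I would substitute $m_0(\mu) = 1$ and $\gamma = 1$ into the statement of Theorem \ref{TH3}(ii). Because $m_0(\mu) = 1$, the normalization factor collapses as $1/(n m_0(\mu)^n) = 1/n$, so the scaling $D_{1/(nm_0^n)}$ becomes $D_{1/n}$. The claimed convergence $D_{1/n}(\mu^{\boxtimes n})^{\uplus n} \to \sigma_1$ in distribution, together with $m_0(\sigma_1) = 1$ and the inclusion $(m_-(\sigma_1), m_0(\sigma_1)) \subset (0,1)$, is immediate from Theorem \ref{TH3}(ii), and the variance function formula \eqref{V-sigma-gamma} reduces to
\begin{equation*}
V_{\sigma_1}(m) = \frac{m(m-1)}{\ln(m)} + m(1-m),
\end{equation*}
as asserted.

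There is essentially no obstacle here: the entire content of the corollary is the numerical input $m_0(\mu) = 1$, $\mathrm{Var}(\mu) = 1$, and the only step requiring any real calculation is the standard moment computation for the Marchenko--Pastur distribution.
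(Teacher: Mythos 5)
Your proposal is correct and follows the same route as the paper: both deduce the corollary by specializing Theorem \ref{TH3}(ii) to $\nu=\mu$ with $m_0(\mu)=1$ and $\gamma=1$. The only (inessential) difference is how these constants are obtained — you compute the first and second moments of $\mu$ by direct integration, whereas the paper reads $V_\mu(m)=m$ and $m_0(\mu)=1$ off the known variance function of the Marchenko--Pastur CSK family via an affine transformation; both yield $V_\mu(m_0)=1$ and hence the same conclusion.
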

\begin{proof}
We have that for $a$ such that $0<a^2\leq1$, the absolutely
continuous centered Marchenko-Pastur  distribution
 $$\nu(dx)=\displaystyle\frac{\sqrt{4-(x-a)^2}}{2\pi(1+ax)}\textbf{1}_{(a-2,a+2)}(x)dx$$
 generates the  \CSK family with variance function
$V_{\nu}(m)=1+am={\V}_{\nu}(m)$ and two sided domain of means
$(-1,1)$. The probability measure $\mu$ is the image of $\nu$, for
$a=1$, by the map $x\longmapsto 1+x$. It generates the \CSK family
${\mathcal{K}}(\mu)$ with variance function $V_{\mu}(m)=m$ and
two-sided domain of means $(0,2)$ with $m_0(\mu)=1$. \\ From Theorem
\ref{TH3} (ii), we have that
$$D_{1/n}\left(\mu^{\boxtimes n}\right)^{\uplus n}\xrightarrow{n\to +\infty} \sigma_1 \ \ \ \ \ \ \ \mbox{in distribution},$$
with $m_0(\sigma_1)=m_0\left(D_{1/n}\left(\mu^{\boxtimes
n}\right)^{\uplus n}\right) =1$ and  for all $m\in
(m_-(\sigma_1),m_0(\sigma_1))\subset (0,1)$  the variance function
is
$$V_{\sigma_1}(m)=\frac{ m(m-1)}{ \ln(m)}+m(1-m).$$
\end{proof}
In what follows, we give the link between the two limit probability
measures $\eta_{\gamma}$ and $\sigma_{\gamma}$ by mean of the
boolean Bercovici-Pata transformation. For instance, Belinschi and
Nica \cite{Belinschi-Nica-08} have defined for $t\geq0$, the mapping
\begin{eqnarray*}
\mathbb{B}_t:\mathcal{M} &\rightarrow& \mathcal{M}\\
 \mu& \mapsto&
\left(\mu^{\boxplus (1+t)}\right)^{\uplus \frac{1}{1+t}}.
\end{eqnarray*}
 They have also proved that $\mathbb{B}_1$ coincides with the canonical bijection
from $\mathcal{M}$ into $\mathcal{M}_{Inf-div}$ discovered by
Bercovici and Pata in their study of the relations between infinite
divisibility in free and in Boolean probability. Here
$\mathcal{M}_{Inf-div}$ stands for the set of probability
distributions in $\mathcal{M}$ which are infinitely divisible with
respect to the operation $\boxplus$. From \cite{Belinschi-Nica-08},
we have that for $t, \ s\geq0$,  $\mathbb{B}_t\circ
\mathbb{B}_s=\mathbb{B}_{t+s}$. This implies that for $t\geq1$,
$\mathbb{B}_1(\mathbb{B}_{t-1}(\mathcal{M}))\subseteq
\mathcal{M}_{Inf-div}$ (see \cite[Corollary
3.1]{Belinschi-Nica-08}), and consequently $\mathbb{B}_t(\mu)$ is
$\boxplus$-infinitely divisible for $\mu\in \mathcal{M}$ and $t \geq
1$. On the other hand, the pseudo-variance function of the \CSK
family generated by $\mathbb{B}_t(\mu)$ is given in \cite{Raouf-19}.
More precisely, it is shown that if $\mu$ is a probability measure
on the real line with support bounded from above, then for
$m>m_0(\mu)$ close enough to $m_0(\mu)$,
 \begin{equation}\label{VBtmu}
\V_{\mathbb{B}_t(\mu)}(m)=\V_{\mu}(m)+tm^2.
\end{equation}
If $m_0(\mu)<+\infty$, the variance functions of the \CSK families
generated by $\mu$ and $\mathbb{B}_t(\mu)$ exist and
\begin{equation}\label{vBtmu}
V_{\mathbb{B}_t(\mu)}(m)=V_{\mu}(m)+tm(m-m_0(\mu)).
\end{equation}
\begin{proposition}
Let $\nu$ be in $\mathcal{M}_+$ with mean $m_0(\nu)$. Suppose that
$\nu$ has a finite second moment. Then denoting
$\gamma=\frac{Var(\nu)}{(m_0(\nu))^2}=\frac{V_{\nu}(m_0)}{m_0^2}$,
we have
$$\eta_{\gamma}=\mathbb{B}_1(\sigma_{\gamma})$$
\end{proposition}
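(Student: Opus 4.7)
The plan is to verify the identity $\eta_{\gamma}=\mathbb{B}_1(\sigma_{\gamma})$ by matching variance functions and means, using the formula \eqref{vBtmu} for the effect of $\mathbb{B}_t$ on the variance function of a \CSK family. Since a \CSK family is uniquely determined by its variance function together with the mean $m_0$ of its generating measure, establishing both equalities will force the two generating measures to coincide.

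First I would specialize \eqref{vBtmu} to $t=1$ and $\mu=\sigma_{\gamma}$. Because $m_0(\sigma_{\gamma})=1$ by Theorem \ref{TH3}(ii), the formula reads
\begin{equation*}
V_{\mathbb{B}_1(\sigma_{\gamma})}(m)=V_{\sigma_{\gamma}}(m)+m(m-1).
\end{equation*}
Next, I would substitute the explicit expression \eqref{V-sigma-gamma}, obtaining
\begin{equation*}
V_{\mathbb{B}_1(\sigma_{\gamma})}(m)=\frac{\gamma m(m-1)}{\ln(m)}+m(1-m)+m(m-1)=\frac{\gamma m(m-1)}{\ln(m)},
\end{equation*}
which is exactly $V_{\eta_{\gamma}}(m)$ given by \eqref{V-eta-gamma}.

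It remains to check that the means agree. For this I would argue that $\mathbb{B}_t$ preserves the first moment: indeed, the free and boolean additive convolution powers scale the mean by the corresponding exponent, so $m_0(\mathbb{B}_t(\mu))=\tfrac{1}{1+t}\cdot(1+t)m_0(\mu)=m_0(\mu)$ whenever the mean of $\mu$ is finite. Applying this with $t=1$ and $\mu=\sigma_{\gamma}$ yields $m_0(\mathbb{B}_1(\sigma_{\gamma}))=m_0(\sigma_{\gamma})=1=m_0(\eta_{\gamma})$.

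Since $\mathbb{B}_1(\sigma_{\gamma})$ and $\eta_{\gamma}$ generate \CSK families with the same variance function and the same mean of the generating measure, the uniqueness of the generating measure (recalled after \eqref{VV2V}) gives $\eta_{\gamma}=\mathbb{B}_1(\sigma_{\gamma})$. The only delicate point in this argument is the mean-preservation statement for $\mathbb{B}_t$; if one prefers to avoid invoking it directly, an alternative is to pass through the pseudo-variance functions via \eqref{VBtmu}, showing that $\V_{\mathbb{B}_1(\sigma_{\gamma})}(m)=\V_{\sigma_{\gamma}}(m)+m^2=\tfrac{\gamma m^2}{\ln m}=\V_{\eta_{\gamma}}(m)$, which determines the generating measure uniquely by \eqref{z2m}--\eqref{G2V}.
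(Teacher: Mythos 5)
Your proof is correct and follows essentially the same route as the paper: apply \eqref{vBtmu} with $t=1$ to get $V_{\mathbb{B}_1(\sigma_{\gamma})}(m)=V_{\sigma_{\gamma}}(m)+m(m-1)=\frac{\gamma m(m-1)}{\ln(m)}=V_{\eta_{\gamma}}(m)$, match the means at $1$, and invoke uniqueness of the generating measure. Your explicit justification that $\mathbb{B}_t$ preserves the first moment is a small welcome addition the paper leaves implicit.
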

\begin{proof}
We have that $m_0(\mathbb{B}_1(\sigma_{\gamma}))=m_0(\sigma_{\gamma})=1=m_0(\eta_{\gamma})$ and for $m<m_0(\mathbb{B}_1(\sigma_{\gamma}))=m_0(\eta_{\gamma})=1$ close enough to $1$ we have
$$V_{\mathbb{B}_1(\sigma_{\gamma})}(m)=V_{\sigma_{\gamma}}(m)+m(m-1)=\frac{ \gamma m(m-1)}{ \ln(m)}+m(1-m)+m(m-1)=\frac{ \gamma m(m-1)}{ \ln(m)}=V_{\eta_{\gamma}}(m).$$
Since the variance function of a \CSK family together with the first
moment determine the corresponding generating probability measure,
we deduce that $\eta_{\gamma}=\mathbb{B}_1(\sigma_{\gamma})$.
\end{proof}

\end{document}